\author[Oh]{Jeongseok Oh}
\address{Department of Mathematical Sciences and Research Institute of Mathematics\\ 
Seoul National University\\
Seoul 08826\\ 
Korea}
\email{jeongseok@snu.ac.kr}
\newtheorem{Thm}{Theorem}[section]
\newtheorem{Def/Thm}[Thm]{Definition/Theorem}
\newtheorem{Cor}[Thm]{Corollary}
\newtheorem{Lemma}[Thm]{Lemma}
\theoremstyle{definition}
\newtheorem{Rmk}[Thm]{Remark}
\newtheorem{example}{Example}
\numberwithin{equation}{section}
\newcommand{\ot }{\otimes}
\newcommand{\ra }{\rightarrow}
\newcommand{\Hom }{{\mathrm{Hom}}}
\newcommand{\rank }{{\mathrm{rank}}}
\newcommand{\cO}{{\mathcal{O}}}
\newcommand{\A}{{\mathbb A}}
\newcommand{\PP }{{\mathbb P}}
\newcommand{\CC }{{\mathbb C}}
\newcommand{\ZZ }{{\mathbb Z}}
\newcommand{\ch}{\mathrm{ch}}
\newcommand{\lan}{\langle}
\newcommand{\ran}{\rangle}
\newcommand{\wY}{\widetilde{Y}}
\newcommand{\wE}{\widetilde{E}}
\newcommand{\wa}{\widetilde{\alpha}}
\newcommand{\wb}{\widetilde{\beta}}
\newcommand\arXiv[1]{\href{http://arxiv.org/abs/#1}{arXiv:#1}}
\newcommand\mathAG[1]{\href{http://arxiv.org/abs/math/#1}{math.AG/#1}}
\begin{document}

\title[Multiplicative Property]{Multiplicative property of localized Chern characters for 2-periodic complexes}

\begin{abstract}
In this paper, we prove the multiplicative property of localized Chern characters. As a direct consequence, a localized Chern character gives rise to a ring homomorphism from the $K$-group of periodic complexes to the bivariant Chow cohomology group.
As an application, we prove the functoriality of Kiem--Li's cosection-localized intersection homomorphisms. 
\end{abstract}

\maketitle

\setcounter{tocdepth}{1}
\tableofcontents

\section{Introduction}
Chern character is a ring homomorphism between $K$-group of locally free sheaves and Chow cohomology group for a scheme. It is known to be one of the most important ingredients to study the Riemann--Roch theorem and may be of other interests.
A localized Chern character\footnote{By pushing objects forward by $X\hookrightarrow Y$, the localized Chern character becomes the usual one, see \cite[Proposition 18.1(a)]{Ful} and \eqref{CoMM} below. This is where the terminology ``localized" comes from.} was introduced by Baum--Fulton--MacPherson for studying the Riemann--Roch for singular schemes \cite{BFM}. Given a closed immersion $X\hookrightarrow Y$, this is an additive homomorphism between the $K$-group of {\em bounded complexes} of locally free sheaves on $Y$ whose cohomologies are supported on $X$ and the bivariant Chow group for the immersion \cite[Chapter 17]{Ful}, cf. \eqref{Add:Hom} below. We consider an immersion of a singular scheme into a smooth space for the Riemann--Roch. 

The construction was extended to the $K$-group of {\em unbounded $2$-periodic complexes} of locally free sheaves on $Y$ which are {\em strictly exact off} $X$ by Polishchuk--Vaintrob. We explain the strict exactness precisely in the following section. They used it to define Witten's top Chern class, which provided an algebraic definition of a virtual fundamental class of a moduli space of spin curves. Although the Riemann--Roch theorem is already one very powerful application of (localized) Chern characters\cite{PV:A}, this implicitly tells us that they can be used to study virtual fundamental classes as well.

Unlike a usual one or a localized one for a bounded complex, the multiplicative property of a localized Chern character for a $2$-periodic complex is not known in general. It is conjectured in the first remark of \cite[Section 2.3]{PV:A}. Meanwhile, even if a localized Chern character for a bounded complex is multiplicative \cite[Example 18.1.5]{Ful}, it is mentioned that ``it would be interesting to find a direct proof of this multiplicative property" there.

The purpose of this article is to prove the multiplicative property. The proof is direct enough so that it can be extended to that for Deligne--Mumford (DM for short) stacks as well. Then we would like to introduce a functorial property which we may encounter when we study virtual fundamental classes as an application of the multiplicative property.

\subsection*{Localized Chern character on a DM stack} Since Polishchuk--Vaintrob constructed localized Chern characters on schemes over $\CC$, we would like to briefly mention how we extend this construction to DM stacks over an arbitrary field ${\bf k}$.

Let $Y$ be a finite type DM stack over ${\bf k}$ having a proper covering by a scheme (see the assumption of \cite[Section 3]{Vis}). Let $A_*(Y)$ denote its Chow group with rational coefficients \cite{G, Vis}. One key fact for the extended construction is that proper pushforwards, flat pullbacks and Gysin pullbacks are well-defined \cite[Proposition 3.7 and Theorem 3.11]{Vis} satisfying expected commutative properties \cite[Lemma 3.9, Theorems 3.12 and 3.13]{Vis}. So the bivariant Chow groups are well-defined (cf. \cite[Section 5]{Vis}). Other useful properties including excision sequence and isomorphicity of bundle pullbacks are studied in \cite{Kr} via \cite[Theorem 2.1.12(ii)]{Kr}. Then we can follow the construction \cite[Chapter 3]{Ful} of Chern classes (cf. \cite[Theorem 2.1.12(viii)]{Kr}) so that we can define Chern characters, Todd classes following \cite[Examples 3.2.3 and 3.2.4]{Ful}. 

Once we have these, we can mimic the construction of localized Chern characters of Polishchuk--Vaintrob \cite[Section 2.2]{PV:A}. Let $E_\bullet$ be a 2-periodic complex on $Y$ 
$$E_\bullet \ : =\ \cdots \xrightarrow{d_-}\, E_+\, \xrightarrow{d_+}\, E_-\, \xrightarrow{d_-}\, E_+\, \xrightarrow{d_+} \cdots$$ 
where $E_+$ sits in even degrees and $E_-$ sits in odd degrees. Here and throughout the paper, a $2$-periodic complex will always mean a $2$-periodic complex of locally free sheaves. We say $E_\bullet$ is {\em strictly exact off a closed substack} $X\hookrightarrow Y$ if it satisfies the following two conditions:
\begin{itemize}
\item $E_\bullet$ is exact on $Y-X$.
\item $\ker d_+$ and $\ker d_-$ are vector bundles on $Y- X$.
\end{itemize}
For a 2-periodic complex $E_\bullet$ on $Y$ strictly exact off $i: X\hookrightarrow Y$, a localized Chern character
\begin{align} \label{Local:Ch}
\mathrm{ch}^Y_X (E_\bullet)\ \in\ A^*(i:X\hookrightarrow Y)
\end{align}
is defined to satisfy
\begin{align} \label{CoMM}
i_*\left(\mathrm{ch}^Y_X (E_\bullet)\right)\ =\ \mathrm{ch}(E_+)  -\mathrm{ch}(E_-)\ \in\ A^*(Y).
\end{align}
We introduce more details in Section \ref{constreview}.

An important remark is that \cite[Lemma 2.1, Propositions 2.2 and 2.3]{PV:A} also work for DM stacks. Essentially it only needs commutativities of pushforwads and pullbacks, excision sequences and isomorphicity of bundle pullbacks which we mentioned above. Then one can follow the proofs in \cite[Chapter I, Proposition 3.4 and Chapter II, Section 2]{BFM} and \cite{PV:A}. For \cite[Proposition 2.3(i)]{PV:A}, we may need the projection formulae, but those for projective morphisms \cite[Proposition 3.2.4(ii) or 3.6.2(iii)]{Kr} are enough. In particular, \cite[Proposition 2.3(iv)]{PV:A} shows $\mathrm{ch}^Y_X$ defines an additive homomorphism \eqref{Add:Hom} below.

\subsection*{Multiplicative property}
Now we introduce the precise statement of the multiplicative property.

\begin{Thm} \label{Multi:Prop}
Let $E^1_\bullet$, $E^2_\bullet$ be 2-periodic complexes on $Y$ strictly exact off $i_1:X_1 \hookrightarrow Y$, $i_2:X_2 \hookrightarrow Y$, respectively. Then the following multiplicative property holds,
\begin{align*}
\mathrm{ch}^Y_{X_1 \cap X_2} (E^1_\bullet \ot E^2_\bullet )\ =\ \mathrm{ch}^{X_1}_{X_1 \cap X_2} ( i^*_1 E^2_\bullet ) \circ \mathrm{ch}^{Y}_{X_1 } ( E^1_\bullet ) .
\end{align*}
\end{Thm}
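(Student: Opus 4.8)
The plan is to verify the identity by evaluating both operations on a single integral closed substack $V \subseteq Y$; by linearity and the compatibility of the construction with rational equivalence (so that $\mathrm{ch}^Y_X(E_\bullet)$ really is an operation $A_*(Y)\ra A_*(X)_\QQ$) this suffices. Writing $U_1 = Y - X_1$, $U_2 = Y - X_2$, so that $Y - (X_1\cap X_2) = U_1 \cup U_2$, I first record the elementary inputs that $E^1_\bullet \otimes E^2_\bullet$ is strictly exact off $X_1 \cap X_2$ (over $U_1$ the factor $E^1_\bullet$ is exact, so the tensor product is exact by the K\"unneth/flatness argument for complexes of vector bundles, and likewise over $U_2$) and that $i_1^* E^2_\bullet$ is strictly exact off $X_1 \cap X_2$ inside $X_1$ (pulling back a strictly exact complex of bundles to the closed substack $X_1$ preserves exactness and the kernel subbundles over $X_1\cap U_2$). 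This makes all three localized Chern characters well defined. At the level of $K$-theory one has $[(E^1\otimes E^2)_+] - [(E^1\otimes E^2)_-] = ([E^1_+]-[E^1_-])\cdot([E^2_+]-[E^2_-])$, so by multiplicativity of $\mathrm{ch}$ the tautological class of the tensor-product Grassmannian factors through those of the two factors; this is the algebraic source of the asserted multiplicativity.

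The geometric heart is to interpolate between the two sides by a two-parameter graph construction. Rather than the one-parameter family $\phi(\cdot,\lambda)$ built from $\lambda\, d^{12}_\pm$, I would deform the two differentials independently, forming an embedding over $Y \times \mathbb{A}^1 \times \mathbb{A}^1$ from $\lambda_1 d^1_\pm$ and $\lambda_2 d^2_\pm$ and taking closures inside a product of Grassmannian bundles over $\PP^1 \times \PP^1$. The class attached to the corner $\{\lambda_1 = \lambda_2 = \infty\}$ can then be computed along two different degeneration paths. The diagonal path $\lambda_1 = \lambda_2 = \lambda \to \infty$ recovers the graph cycle $V_1$ of $E^1_\bullet \otimes E^2_\bullet$, hence the left-hand side. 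The iterated path — first $\lambda_1 \to \infty$, which localizes $E^1_\bullet$ along $X_1$ and produces $\mathrm{ch}^Y_{X_1}(E^1_\bullet)\cap[V]$ as a cycle over $X_1$, and then $\lambda_2 \to \infty$ over $X_1$, which applies $\mathrm{ch}^{X_1}_{X_1\cap X_2}(i_1^* E^2_\bullet)$ — recovers the right-hand side. Since both paths compute the value at $(\infty,\infty)$ of the same $\PP^1\times\PP^1$ family, the two resulting classes must agree in $A_*(X_1\cap X_2)_\QQ$; the two $\PP^1$-directions supply the needed rational equivalences, while the projection formula together with the factorization of the tautological Chern character splits the cap product.

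To make the two computations match literally I would use Lemma \ref{Char:Lem} to handle the correction cycles $V_2$. The freedom to replace $V_2$ by any $\Delta_* D$ agreeing with it over the open complement lets me choose, on the tensor-product side, a correction cycle assembled from the kernel bundles $\ker d^1_\pm$ and $\ker d^2_\pm$ via the K\"unneth description of $\ker d^{12}_\pm$ over $U_1 \cup U_2$; this is precisely the correction produced by the iterated construction on the right-hand side. Unwinding the map $\psi$ for $E^1_\bullet\otimes E^2_\bullet$ in terms of the $\psi$'s of the two factors is then a direct, if bookkeeping-heavy, linear-algebra computation, and Lemma \ref{Char:Lem} ensures that any discrepancy supported over $X_1$ or $X_2$ may be discarded or rewritten without changing the class.

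The step I expect to be the main obstacle is proving that the diagonal and iterated degenerations of the two-parameter family yield the same class at the corner, i.e. the path-independence of the value at $(\infty,\infty)$ as a statement about the commuting of the two specialization maps. The difficulty is that the tensor-product differential $d^{12}_\pm = d^1_\pm\otimes 1 \pm 1\otimes d^2_\pm$ is a \emph{sum}, so the graph of $\lambda d^{12}$ is not the product of the graphs of $\lambda d^1$ and $\lambda d^2$, and the limit cannot be factored naively. Controlling the closure of the graph over the corner and showing that the excess contributions are supported exactly where Lemma \ref{Char:Lem} permits them to be absorbed is the technical crux; this is also where strict exactness is essential, since it guarantees that all relevant kernels remain genuine subbundles and that the degenerations stay of the expected dimension.
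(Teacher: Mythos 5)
Your overall strategy is the one the paper follows: reduce to a single integral closed substack, realize the right-hand side as an iterated specialization of a two-parameter family over $Y\times\mathbb{A}^1\times\mathbb{A}^1$ inside $(G_1\times_Y G_2)\times\PP^1\times\PP^1$, compare with the diagonal specialization, and use Lemma \ref{Char:Lem} to absorb the correction cycles. But the step you yourself flag as ``the main obstacle'' is a genuine gap, and it is not settled by path-independence of the corner value alone. Both of your specializations live in $G_1\times_Y G_2$, the fibre product of the Grassmannian bundles of the two factors, whereas the left-hand side is computed in the Grassmannian bundle $G$ of $E^1_\bullet\ot E^2_\bullet$. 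Nothing in your proposal transports cycles, and their cap products against tautological Chern characters, from one ambient space to the other; the remark that ``the tautological class of the tensor-product Grassmannian factors through those of the two factors'' is a $K$-theoretic identity of classes on $Y$, not a correspondence between the two Grassmannian bundles.

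The missing construction is the rational map $g: G_1\times_Y G_2\dashrightarrow G$ sending $((V_1,V_2),(W_1,W_2))$ to $(V_1\ot W_1\op V_2\ot W_2,\ V_1\ot W_2\op V_2\ot W_1)$, defined on a dense open $\mathcal{U}$ where it is a locally closed embedding. Taking the closure $\bar{\mathcal{U}}$ of its graph gives proper maps $g_1,g_2$ to the two sides, along which (i) the tautological bundles match, $g_1^*(\xi^1\boxtimes\xi^2)|_{\mathcal{U}}=g_2^*\xi|_{\mathcal{U}}$, so the projection formula transports the cap products, and (ii) --- and this is exactly the answer to your worry that $d_\pm=d^1_\pm\ot 1\pm 1\ot d^2_\pm$ is a sum --- the composite $(g\times\mathrm{id})\circ\varphi$ equals the one-parameter graph embedding $\phi$ of the tensor-product complex: $g$ applied to the pair of graphs of $\lambda d^1_\pm$ and $\lambda d^2_\pm$ is literally the graph of $\lambda d_\pm$. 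It is this identity (a direct linear-algebra check, the paper's Lemma \ref{Well:g}), not any factorization of the graph of $\lambda d_\pm$ as a product, that identifies the diagonal specialization with $[Y_1]$; the comparison of the remaining cycles over $U=Y\setminus(X_1\cap X_2)$ together with Lemma \ref{Char:Lem} then finishes the proof. Without $g$, $\bar{\mathcal{U}}$, and these two compatibilities, your two computations at the corner $(\infty,\infty)$ produce classes in different spaces and cannot be equated.
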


The tensor product $E^1_\bullet \ot E^2_\bullet$ above is defined by considering $E^i_\bullet$ as $\ZZ/2$-graded sheaves of $\cO_Y$-modules. To see that it is strictly exact off $X_1\cap X_2$, it is enough to show that the tensor product is strictly exact when $E^1_\bullet$ is, without loss of generality, for the index $i=1$ and $2$. It comes from the facts that locally we have isomorphisms $E^1_+\cong \ker d^1_+\oplus \ker d^1_-\cong E^1_-$ of bundles and the differential morphisms are $d^1_-=\mathrm{id}\oplus 0$ and $d^1_+=0\oplus\mathrm{id}$ along the decompositions when $E^1_\bullet$ is strictly exact. 

Theorem \ref{Multi:Prop} is proved in \cite[Proposition 2.3(vi)]{PV:A} for a special case when 
\begin{itemize}
\item $Y$ is the total space of a vector bundle $V$ on $X_1$,
\item $E^1_\bullet$ is the Koszul 2-periodic complex $\Lambda^\bullet\pi^*V^*$, where $\pi:Y\to X_1$ is the bundle projection, which is then strictly exact off $X_1$ and
\item $E^2_\bullet$ is the pullback of a 2-periodic complex on $X_1$ strictly exact off $X=X_1 \cap X_2$.
\end{itemize} 
In \cite[Lemma 2.18]{BKO1}, Sreedhar and the author proved $K$-theoretic version of Theorem \ref{Multi:Prop}.

\smallskip
We define $K^{\ZZ /2, st}_0(Y)_X$ to be an abelian group generated by $2$-periodic complexes of {\em locally free sheaves} on $Y$ strictly exact off $X$ with the relations coming from short exact sequences of complexes of {\em coherent sheaves}.\vspace{-0.5mm} The tensor product defines a ring structure on $K^{\ZZ /2, st}_0(Y)_X$ since it preserves short exact sequences.
As a direct consequence of Theorem \ref{Multi:Prop}, we obtain the following corollary.
\begin{Cor} \label{coradd}
The additive homomorphism
\begin{align} \label{Add:Hom}
\ch^Y_X\ :\ K^{\ZZ /2, st}_0(Y)_X \ \longrightarrow\ A^*(X \xrightarrow{i} Y)
\end{align}
is a ring homomorphism.
\end{Cor}

\subsection*{Functoriality of cosection-localized intersection homomorphisms}
A cosection-localized intersection homomorphism constructed by Kiem--Li \cite{KL1} has become one of the most useful tools to study virtual fundamental classes of moduli spaces. It localizes the Fulton--MacPherson's intersection homomorphism (taking a cycle in a vector bundle to the intersection with the zero section) in the following sense.

Let $M$ be a finite type DM stack and $F$ be a vector bundle on $M$ or a locally free sheaves of $\cO_M$-modules.
Let $\sigma: F \ra \cO_M$ be a homomorphism of $\cO_M$-modules (called a \emph{cosection}).
We denote by $w_\sigma \in \Gamma(\cO_F)$ the function on the total space of $F$ defined by $\sigma$.
Then the zero loci $Z(w_\sigma)$ of the function $w_\sigma$ and $Z(\sigma):=Z(\sigma^*)$ of the section $\sigma^*$ satisfy
\begin{align*}
Z(\sigma)\ \hookrightarrow\ M\ \hookrightarrow\ Z(w_\sigma)\ \hookrightarrow\ F,
\end{align*}
where the immersion in the middle is given by the zero section.
Kiem--Li constructed an intersection homomorphism of degree$=\rank F$ \cite[Proposition 1.3]{KL1}
$$0^!_{F,\sigma}\ :\ A_*(Z(w_\sigma))\ \longrightarrow\ A_{*-\rank F}(Z(\sigma))$$
such that the following diagram commutes 
$$
\xymatrixcolsep{5pc}
\xymatrix{
A_*(Z(w_\sigma)) \ar[r]^-{\text{pushforward}} \ar[d]_-{0^!_{F,\sigma}} & A_*(F) \ar[d]^-{0^!_F} \\
A_{*-\rank F}(Z(\sigma)) \ar[r]_-{\text{pushforward}} & A_{* - \rank F}(M),
}
$$
where $0^!_F$ denotes the Fulton--MacPherson's intersection homomorphism.
The homomorphism $0^!_{F, \sigma}$ is a \emph{cosection-localized intersection homomorphism}.

\begin{Rmk}
Kiem--Li's construction of the cosection-localized intersection homomorphisms \cite[Section 2, particularly Corollary 2.9]{KL1} actually works for an arbitrary field although they started with $\CC$. The place they need $\CC$ is the cone reduction criterion \cite[Lemma 4.4]{KL1} which tells us the Behrend-Fantechi cone is contained in the domain of the homomorphism under a mild assumption so that we can define the localized virtual cycle as its image.
\end{Rmk}

In \cite[Theorem 1.1]{KO1}, Kim and the author proved a cosection-localized intersection homomorphism is equivalent to a localized Chern character up to a Todd class. Using this expression and the multiplicative property of a localized Chern character, we prove the functorial property of cosection-localized intersection homomorphisms.
\begin{Thm} \label{Cosec:Func}
Let $F_1, F_2$ be vector bundles on $M$ and 
$$
\sigma_1\ :\ F_1\ \longrightarrow\ \cO_M,\ \ \sigma_2\ :\ F_2\ \longrightarrow\ \cO_M$$
be cosections. Then the following functoriality holds true
\begin{align*}
0^!_{F_1 \oplus F_2, \sigma_1 \oplus \sigma_2} \ =\ & 0^!_{F_1, \sigma_1} \circ 0^!_{ F_2,  \sigma_2} \\
&:\ A_*\left( Z(w_{\sigma_1}) \times_M Z(w_{\sigma_2}) \right)\ \longrightarrow\ A_{*-r}\left( Z(\sigma_1 \oplus \sigma_2) \right),
\end{align*}
where $r:= \rank F_1 + \rank F_2$ and $0^!_{F_2, \sigma_2}$ is considered to be a homomorphism $A_*\left( Z(w_{\sigma_1}) \times_M Z(w_{\sigma_2}) \right) \ra A_{*-\rank F_2}\left( Z(w_{\sigma_1}) \times_M Z(\sigma_2) \right)$ by its bivariant property.
\end{Thm}
Note that the homomorphism $0^!_{F_1 \oplus F_2, \sigma_1 \oplus \sigma_2}$ is defined through
$$
Z(w_{\sigma_1}) \times_M Z(w_{\sigma_2})\ \hookrightarrow\ Z(w_{\sigma_1 \oplus \sigma_2})\ =\ Z(w_{\sigma_1} + w_{\sigma_2}).
$$
It seems interesting to think if $0^!_{F_1, \sigma_1} \circ 0^!_{ F_2,  \sigma_2}$ can be considered to be an operator defined on $A_*(Z(w_{\sigma_1 \oplus \sigma_2}))$.

\subsection*{Plan}

Here is a plan of the paper.
In Section \ref{proof}, we review a definition of a localized Chern character \eqref{Local:Ch} and prove the multiplicative property (Theorem \ref{Multi:Prop}). The key idea of the proof is to construct a deformation \eqref{xi2} of the $2$-periodic complex $E_\bullet$.
In Section \ref{app}, we prove the functoriality of cosection-localized intersection homomorphisms (Theorem \ref{Cosec:Func}) as an application.

\bigskip

\noindent{\bf Acknowledgments}
The author would like to thank Bumsig Kim for suggesting the author to think about the problem.
He thank Feng Qu for pointing out a mistake in the earlier draft and advices.
He also thank Alexander Polishchuk and the kind referee for helpful advices and comments.

J. O. was supported by the KIAS individual grant MG063002, the New Faculty Startup Fund from
Seoul National University and the National Research Foundation of Korea (NRF) grant funded by the Korean government (MSIT)(RS-2024-00339364).

\section{Multiplicative property of localized Chern characters} \label{proof}
 
\subsection{Construction of localized Chern characters \eqref{Local:Ch}} \label{constreview}
We review the construction in \cite[Section 2]{PV:A} of localized Chern characters \eqref{Local:Ch} briefly, but for a DM stack $Y$ over ${\bf k}$. As we have mentioned, the idea is based on that in \cite{BFM} for bounded complexes in \cite[Chapter 18]{Ful}. So we refer the results in \cite[Chapter 18]{Ful} together with those in \cite[Section 2]{PV:A}.

\subsection*{Construction}
When $X=Y$, we define
\begin{align} \label{X=Y}
\mathrm{ch}^Y_X(E_\bullet)\ :=\ \mathrm{ch}(E_+) - \mathrm{ch}(E_-).
\end{align}
Now we assume that $X \neq Y$.
To define the class \eqref{Local:Ch}, we would like to construct $\ch^Y_X(E_\bullet) \cap [V] \in A_*(X)$ for an integral closed substack $V$ of $Y$.

Let $Gr:=Gr(\rank E_+,E_+ \oplus E_-)$ be the Grassmannian over $Y$ and $G:= Gr \times_Y Gr$ be the product of the Grassmannian.
Using the fact that $\rank E_+ =\rank E_-$ when $X\neq Y$, we define a locally closed immersion
\begin{align}\label{PHI}
\phi\ :\ Y \times \A^1 \ \rightharpoonup\ G \times \PP^1, \ \ \
 (y, \lambda)\ \mapsto\ (\, \Gamma_{\lambda d_+(y)},\, \Gamma_{\lambda d_-(y)},\, \lambda),
\end{align}
where $\Gamma$ denotes the graph and $d_\pm(y): E_\pm |_y \ra E_\mp|_y$ is the induced homomorphism at fiber. This $\phi$ parametrizes the two $\rank E_+$ subbundles 
\begin{align}\label{f3}
\ker(E_+\oplus E_-\xrightarrow{\lambda d_+-\mathrm{id}} E_-),\ \ker(E_+\oplus E_-\xrightarrow{\mathrm{id}-\lambda d_-} E_+)\ \subset\ E_+\oplus E_-
\end{align}
over $Y\times\mathbb{A}^1$, hence it is a morphism of stacks. Since it is a closed immersion into $G\times\mathbb{A}^1$, it is a locally closed immersion into $G\times\PP^1$. We use the notation $\rightharpoonup$ for locally closed immersions instead of $\hookrightarrow$ to distinguish them to closed immersions.
Let $Y_\infty \subset G$ be the restriction to $\infty \in \PP^1$ of the scheme theoretic image of $\phi$ which is the closure $\overline{\phi(Y \times \mathbb{A}^1)}$ set-theoretically. 

Recall that $\rank(\ker d_\pm)$ can be considered as locally constant functions on $Y-X$. Hence they can be extended to locally constant functions on $Y$.
Using these, we define
$$
H\ :=\ Gr(\rank(\ker d_+), E_+) \times_Y Gr(\rank(\ker d_-), E_-)
$$ 
which is another product of Grassmannians over $Y$. The direct sum gives rise to an immersion $H \hookrightarrow Gr \hookrightarrow G$, where the second one is the diagonal morphism. 
On $Y-X$, we have isomorphisms $E_+\cong \ker d_+\oplus \ker d_-\cong E_-$ of bundles locally and the differential morphisms are $d_-=\mathrm{id}\oplus 0$ and $d_+=0\oplus\mathrm{id}$. Using these, the bundles \eqref{f3} defining $\phi$ identify with the following $\rank=\rank E_+$ bundles locally on $(Y-X)\times\mathbb{A}^1$
\begin{align*}
\mathrm{im}\left(\ker d_+\oplus \ker d_-\xrightarrow{(\mathrm{id}\oplus\mathrm{id})\oplus (0\oplus\lambda\mathrm{id})} (\ker d_+\oplus \ker d_-)\oplus (\ker d_+\oplus \ker d_-)\right)\\
\text{and}\qquad\qquad\qquad\qquad\qquad\qquad\qquad\qquad \\
\mathrm{im}\left(\ker d_+\oplus \ker d_-\xrightarrow{(\lambda\mathrm{id}\oplus 0)\oplus (\mathrm{id}\oplus\mathrm{id})} (\ker d_+\oplus \ker d_-)\oplus (\ker d_+\oplus \ker d_-)\right).
\end{align*}
These are extended to bundles on $(Y-X)\times\PP^1$
\begin{align} \label{hahahaha}
\nonumber \mathrm{im}\left(\ker d_+\oplus \ker d_-\xrightarrow{(\mathrm{id}\oplus\mu\mathrm{id})\oplus (0\oplus\nu\mathrm{id})} (\ker d_+\oplus \ker d_-)\oplus (\ker d_+\oplus \ker d_-)\right)\\
\text{and}\qquad\qquad\qquad\qquad\qquad\qquad\qquad\qquad \\
\mathrm{im}\left(\ker d_+\oplus \ker d_-\xrightarrow{(\nu\mathrm{id}\oplus 0)\oplus (\mu\mathrm{id}\oplus\mathrm{id})} (\ker d_+\oplus \ker d_-)\oplus (\ker d_+\oplus \ker d_-)\right), \nonumber
\end{align}
where $(\nu,\mu)\in\PP^1$ is the homogeneous coordinates on $\PP^1$ with $\lambda=\nu/\mu$. Although we have used the local decompositions of $E_+\cong \ker d_+\oplus \ker d_-\cong E_-$, the bundles \eqref{hahahaha} are global since $\phi$ is globally defined. So it defines a closed immersion $(Y-X)\times\PP^1\hookrightarrow G|_{Y-X}\times\PP^1$. When $\lambda=\infty$, i.e. $(\nu,\mu)=(1,0)$, both bundles on $(Y-X)\times\{\infty\}$ are 
$$
(\ker d_+\oplus 0)\oplus (0\oplus \ker d_-)\ \subset\ (\ker d_+\oplus \ker d_-)\oplus (\ker d_+\oplus \ker d_-),
$$
which means $Y_\infty \times_Y (Y-X)$ is $Y-X$ and the restriction $Y_\infty \times_Y (Y-X) \hookrightarrow G|_{Y-X}$ of the closed immersion $\phi|_\infty:Y_\infty\hookrightarrow G$ to $Y-X\subset Y$ factors through
\begin{align*} 
Y_\infty \times_Y (Y-X) \ \hookrightarrow\ H|_{Y-X} \ \hookrightarrow\  G|_{Y-X}.
\end{align*}
This tells us that $Y_\infty$ is closely immersed into $G|_X \cup H$. 

Let $\widetilde{V}$ be the scheme theoretic image of $\phi:V \times \mathbb{A}^1 \hookrightarrow G \times \PP^1$. We define $[V_\infty]$ as its Gysin pullback image by the regular immersion $\{\infty\} \hookrightarrow \PP^1$,
\begin{align}\label{Vinfty}
[V_\infty] \ := \ \infty^!\,[\widetilde{V}] \ \in\ A_{\dim V}(Y_\infty),
\end{align}
which lies in $Y_\infty$.
Since $Y_\infty$ is embedded in the union $Y_\infty \subset G|_X \cup H$, we can choose a component $[V_{X,\infty}] \in A_{\dim V}(G|_X \cap Y_\infty)$ of $[V_\infty]$ lying in $G|_X$. Then the difference lies in the other component $H$. More precisely, we have
\begin{align} \label{sitH}
[V_\infty] - j_*[V_{X,\infty}] \ \in \ \text{image}\left( A_{\dim V}(H \cap Y_\infty) \ra A_{\dim V} (Y_\infty) \right),
\end{align}
where $j: G|_X \cap Y_\infty \hookrightarrow Y_\infty$ is the closed immersion.
Then $\ch^Y_X(E_\bullet) \cap [V]$ is defined to be
\begin{align} \label{Def:LcCh}
\ch^Y_X(E_\bullet) \cap [V]\ :=\ p_{X*}\left( \left( \ch(\xi_+) - \ch(\xi_-) \right) \cap [V_{X,\infty}]  \right)
\end{align}
where $p_X: G|_X \ra X$ is the projection morphism and $\xi_\pm$ are the tautological bundles on each component of $G$. Note that the restrictions $\xi_+|_H$ and $\xi_-|_H$ are isomorphic as $H$ factors through the diagonal morphism $Gr\hookrightarrow G$.

\subsection*{Specialization map}
Let $X_\infty := Y_\infty \times_Y X$. Unfortunately, the specialization morphism
$$
A_*(Y) \ \longrightarrow\ A_*(X_\infty), \ \ \ [V]\ \longmapsto\ [V_{X,\infty}].
$$
constructed above may not be well-defined. We only know $[V_{X,\infty}]|_{X_\infty-H}=[V_\infty]|_{X_\infty-H}$ is well-defined. Hence the specialization morphism is well-defined only up to a class in $H$.
Letting $B_*(X_\infty) := A_*(X_\infty) / A_*(X_\infty \cap H)$ be the quotient (which is isomorphic to $A_*(X_\infty \backslash H)$ by the excision sequence \cite[Proposition 2.3.6]{Kr}), we obtain a well-defined specialization morphism
\begin{align} \label{mod:sp}
sp\ :\ A_*(Y)\ \longrightarrow\ B_*(X_\infty), \ \ \ [V]\ \longmapsto\ [V_{X,\infty}].
\end{align}
Then we can rewrite \eqref{Def:LcCh} by
\begin{align} \label{Alter:deC}
\ch^Y_X(E_\bullet) \cap - \ =\ p_{X*}\left( \left( \ch(\xi_+) - \ch(\xi_-) \right) \cap sp\,(-)  \right).
\end{align}

\begin{Rmk}
We defined a homomorphism $\ch^Y_X(E_\bullet):A_*(Y)\to A_*(X)$ in \eqref{Alter:deC}.
Similarly for any morphism $Y' \ra Y$, we obtain $A_*(Y')\to A_*(Y'\times_Y X)$ by doing the same construction using the pullback of $E_\bullet$ to $Y'$, so that we get a bivariant class
\begin{align*}
\ch^Y_X(E_\bullet)\ \in\ A^*(X \hookrightarrow Y).
\end{align*}
\end{Rmk}

\subsection{Alternative description of \eqref{Alter:deC}}
We keep assuming $X \neq Y$ so that we have $ \rank E_+ = \rank E_-$. In this section, we provide an equivalent description of a localized Chern character which seems to behave better in deformations. The key point is to view $\xi_\pm$ as a part of a complex. This is also the key idea of the proof of the multiplicative property.

\subsection*{Periodic complex structure on $\xi_\pm$}
Here we would like to explain that the tautological bundles $\xi_\pm$ form a 2-periodic complex
\begin{align} \label{xi2}
\xymatrix{
\xi_\bullet \ :=\  \{\,  \xi_+  \ar@<1ex>[r]  ^-{\delta_+}
&   \ar@<1ex>[l]^-{\delta_-} \xi_-  \}
}
\end{align}
on the scheme theoretic image $\widetilde{Y}\subset G\times (\PP^1\smallsetminus\{0\})$ of $\phi|_{Y\times(\mathbb{A}^1\smallsetminus\{0\})}$ \eqref{PHI}. We will prove it is strictly exact off its restriction $\widetilde{X} := \widetilde{Y} \times_Y X$ to $X$. 

We start with an exact 2-periodic complex on $G$
\begin{align} \label{BIG2p}
\xymatrix{
\{ \, E_+ \oplus E_-\ \ar@<1ex>[r]  ^-{pr_-}
&  \ar@<1ex>[l]^-{pr_+}
\ E_+ \oplus E_- \}
}
\end{align}
where $pr_\pm$ are the projection morphisms. Here and after, we omit the notations of pullbacks for bundles by abuse of notations.
Let $\alpha_\pm \in \Hom (\xi_\pm, (E_+ \oplus E_-)/ \xi_\mp)$ be the homomorphisms defined by 
$$
\alpha_\pm\ :\ \xi_\pm\, \subset\, E_+ \oplus E_-\, \xrightarrow{pr_\mp}\, E_+ \oplus E_-\, \longrightarrow\, (E_+ \oplus E_-)/ \xi_\mp.
$$
Then on a closed substack $(\alpha_+)^{-1}(0) \cap (\alpha_-)^{-1}(0)\subset G$, 
the complex \eqref{BIG2p} restricts to a 2-periodic complex
$$\xymatrix{
\xi_\bullet\ :=\  \{\,  \xi_+  \ar@<1ex>[r]  ^-{\delta_+}
&   \ar@<1ex>[l]^-{\delta_-} \xi_-  \},
}
$$
where $\delta_\pm$ denote the factored morphisms of the compositions,
$$
\xymatrix@R=1mm@C=8mm{
\xi_\pm \ar@{-->}[r]^-{\delta_\pm}\ar@{^(->}[dd] & \xi_\mp \ar@{^(->}[dd] \\
&&\hspace{-7mm}\text{on }\ (\alpha_+)^{-1}(0) \cap (\alpha_-)^{-1}(0). \\
E_+\oplus E_- \ar[r]_-{pr_\mp} &E_+\oplus E_-
}
$$
One can check $\phi\left(Y \times (\mathbb{A}^1\smallsetminus\{0\})\right) \subset \left((\alpha_+)^{-1}(0) \cap (\alpha_-)^{-1}(0) \right) \times (\A^1 \smallsetminus \{0\})$ so that we have 
\begin{align*} 
\widetilde{Y}\ \subset\ \left((\alpha_+)^{-1}(0) \cap (\alpha_-)^{-1}(0) \right) \times (\PP^1 \smallsetminus \{0\}).
\end{align*}
Hence the 2-periodic complex $\xi_\bullet$ is defined on $\widetilde{Y}$.

It remains to show that $\xi_\bullet$ is strictly exact off $\widetilde{X}$.
By the description \eqref{hahahaha} of tautological bundles $\xi_\pm$, one can check that $\ker d_+\oplus 0\subset E_+\oplus E_-$ and $0\oplus \ker d_-\subset E_+\oplus E_-$\vspace{-0.3mm} are contained in the restrictions of the bundles $\xi_+|_{\widetilde{Y}\smallsetminus\widetilde{X}}$ and $\xi_-|_{\widetilde{Y}\smallsetminus\widetilde{X}}$ to \vspace{-0.7mm}$\widetilde{Y}\smallsetminus\widetilde{X}$; and are isomorphic to $\ker\delta_+=\mathrm{im}\delta_-$ and $\ker\delta_-=\mathrm{im}\delta_+$, respectively. Hence $\xi_\bullet$ is strictly exact off $\widetilde{X}$.

One observation we are going to use later is that the restriction of $(\widetilde{Y}, \xi_\bullet)$\vspace{-0.3mm} to $\widetilde{Y}|_\lambda=\phi\;(Y\times \{\lambda\})\cong Y$, $\lambda\neq 0,\infty$, is isomorphic to $(Y, E_\bullet)$,
$$
\xymatrix@R=1mm@C=15mm{
\xi_\pm|_\lambda \ar[r]^-{\delta_\pm}\ar[dd]^-{\lambda pr_\pm}_-\cong & \xi_\mp|_\lambda \ar[dd]^-{\lambda pr_\mp}_-\cong \\
&&\hspace{-8mm}\text{on }\ \widetilde{Y}|_\lambda=\phi\;(Y\times \{\lambda\})\cong Y. \\
E_\pm \ar[r]_-{d_\pm} &E_\mp
}
$$

\subsection*{Alternative description of the localized Chern character \eqref{Alter:deC}}
Let
$$
sp_A\ :\ A_*(Y)\ \longrightarrow\ A_*(Y_\infty), \ \ \ [V]\ \longmapsto\ [V_{\infty}]
$$
be the specialization morphism taking an integral closed substack $V$ to $V_\infty$ constructed in \eqref{Vinfty}. Beware that it is different from $sp$ in \eqref{mod:sp}.

\begin{Lemma} \label{rein}
We obtain an equivalence 
\begin{align*}
\ch^Y_X(E_\bullet) \cap -\ =\ p_{X*} \left( \ch^{Y_\infty}_{X_\infty}(\xi_\bullet|_\infty) \cap sp_A(-) \right) .
\end{align*}
\end{Lemma}

\begin{proof}
By definition of $\ch^Y_X(E_\bullet)$ in \eqref{Alter:deC} and \eqref{X=Y}, we obtain
$$
\ch^Y_X(E_\bullet) \cap -\ =\ p_{X*} \left( \ch^{X_\infty}_{X_\infty}(\xi_\bullet|_{X_\infty}) \cap sp\, (-) \right).
$$
When $X=Y$ it follows from the $\mathbb{A}^1$-homotopy property: the Gysin pullbacks $\lambda^!:A_{*+1}(Y\times\PP^1)\rightarrow A_*(Y)$ are independent of $\lambda\in\PP^1$.
So it is enough to prove that
\begin{align*}
p_{X*} \left( \ch^{Y_\infty}_{X_\infty}(\xi_\bullet|_\infty) \cap sp_A(-) \right) \ =\ p_{X*} \left( \ch^{X_\infty}_{X_\infty}(\xi_\bullet|_{X_\infty}) \cap sp\, (-) \right).
\end{align*}
For an integral closed substack $V \subset Y$, we obtain a decomposition
$$
sp_A[V] \ =\ sp\,[V] + [V_H], \ \ \ [V_H]\ \in\ A_{\dim V}(H \cap Y_\infty)
$$
by \eqref{sitH}. By \cite[Proposition 2.3(iii)]{PV:A}, we have 
$$
\ch^{Y_\infty}_{X_\infty}(\xi_\bullet|_\infty)\, \cap\, sp\,[V] \ =\ \ch^{X_\infty}_{X_\infty}(\xi_\bullet|_{X_\infty})\, \cap\, sp\,[V]
$$
since $sp\,[V] \in A_{\dim V}(X_\infty)$.
Let $\iota: H \cap X_\infty \hookrightarrow X_\infty$ be the closed immersion.
Then by \cite[Proposition 2.3(i),(iii)]{PV:A}, we have
\begin{align*}
\ch^{Y_\infty}_{X_\infty}(\xi_\bullet |_\infty)\, \cap\, [V_H] \ =\ \iota_* \left( \ch^{H \cap Y_\infty}_{H \cap X_\infty}(\xi_\bullet|_{H \cap Y_\infty})\, \cap\, [V_H] \right).
\end{align*}
Since $\xi_\bullet$ is strictly exact on $H$, the right-hand side is zero.
\end{proof}

\subsection{Proof of the multiplicative property (Theorem \ref{Multi:Prop})}
We follow the notations in Theorem \ref{Multi:Prop}.
Let $\xi^2_\bullet$ be a 2-periodic complex \eqref{xi2} constructed by using $E^2_\bullet$.
Similarly, we define $G^2$, $Y^2_\infty$, $X^2_\infty$, $sp^2_A$, $\widetilde{Y}^2$, $\widetilde{X}^2$ using $E^2_\bullet$.
Simply, we let $X := X_1 \cap X_2$, $X_\infty := X^2_\infty \times_{X_2} X$ and $\widetilde{X} := \widetilde{X}^2 \times_{X_2} X$.
\begin{Lemma} \label{Lem4}
We have an equality of homomorphisms
\begin{align*} 
\ch^Y_X(E^1_\bullet \ot E^2 _\bullet) \cap - \ =\ p_{X*} \left( \ch^{Y^2_\infty}_{X_\infty}(E^1_\bullet \ot \xi^2_\bullet) \cap sp^2_A(-) \right).
\end{align*}
Here, $E^1_\bullet$ on the right-hand side is the pullback along $Y^2_\infty\hookrightarrow G^2 \ra Y$.
\end{Lemma}

\begin{proof}
Note that the proof provides an alternative proof of Lemma \ref{rein} by letting 
$$E^1_\bullet := \cdots \ra \cO_Y \ra 0 \ra \cO_Y \ra 0 \ra \cdots.$$

The main idea we use is the homotopy property. 
We first consider a class 
$$
p_{\widetilde{X}*} \left( \ch^{\widetilde{Y}^2}_{\widetilde{X}}(E^1_\bullet\ot \xi^2_\bullet) \cap [\widetilde{V}^2] \right) \ \in\ A_{\dim V +1}\left(X\times (\PP^1\smallsetminus \{0\})\right)
$$
for an integral closed substack $V \subset Y$, where $p_{\widetilde{X}} : \widetilde{X} \ra X\times (\PP^1\smallsetminus \{0\})$ is the projection morphism.
Note that $E^1_\bullet \ot \xi^2_\bullet$ is strictly exact off $\widetilde{X}$ since $\xi^2_\bullet$ is strictly exact off $\widetilde{X}^2_\infty$.
The Gysin map
$$
\lambda^! \ :\ A_{\dim V +1}\left(X\times (\PP^1\smallsetminus \{0\})\right) \ \longrightarrow\ A_{\dim V}\left(X\right)
$$
is independent of the closed immersion $\lambda : \{\lambda\} \hookrightarrow \PP^1 \smallsetminus \{0\}$, for instance by \cite[Proposition 3.1.2]{Kr}. Hence we have
\begin{align} \label{spCial}
& 1^!\left( p_{\widetilde{X}*} \left( \ch^{\widetilde{Y}^2}_{\widetilde{X}}(E^1_\bullet\ot \xi^2_\bullet) \cap [\widetilde{V}^2] \right) \right) \\
& \qquad \qquad \qquad \qquad = \ \infty^!\left( p_{\widetilde{X}*} \left( \ch^{\widetilde{Y}^2}_{\widetilde{X}}(E^1_\bullet\ot \xi^2_\bullet) \cap [\widetilde{V}^2] \right) \right). \nonumber
\end{align}

Using the commutativity of refined Gysin pullbacks and pushforwards \cite[Theorem 3.12(i)]{Vis} and the bivariant property \cite[Definition 17.1(C3)]{Ful}, the left-hand side of \eqref{spCial} becomes
\begin{align*}
1^!\left( p_{\widetilde{X}*} \left( \ch^{\widetilde{Y}^2}_{\widetilde{X}}(E^1_\bullet\ot \xi^2_\bullet) \cap [\widetilde{V}^2] \right) \right) &= (p_{\widetilde{X}}|_1)_*1^!  \left(\ch^{\wY^2}_{\widetilde{X}}(E^1_\bullet \ot \xi^2_\bullet) \cap [\widetilde{V}^2] \right) \\
& = \ch^Y_X(E^1_\bullet \ot E^2 _\bullet) \cap 1^![\widetilde{V}^2] \\
& = \ch^Y_X(E^1_\bullet \ot E^2 _\bullet) \cap [V].
\end{align*}
Note that $p_{\widetilde{X}}|_1$ in the second term is an isomorphism, hence we omitted it in the last two terms.
On the other hand, the right-hand side of \eqref{spCial} becomes
\begin{align*}
\infty^!\left( p_{\widetilde{X}*} \left( \ch^{\widetilde{Y}^2}_{\widetilde{X}}(E^1_\bullet\ot \xi^2_\bullet) \cap [\widetilde{V}^2] \right) \right) &= p_{X*}\left( \infty^! \left( \ch^{\widetilde{Y}^2}_{\widetilde{X}}(E^1_\bullet\ot \xi^2_\bullet) \cap [\widetilde{V}^2] \right) \right) \\
& = p_{X*} \left( \ch^{Y^2_\infty}_{X_\infty}(E^1_\bullet \ot \xi^2 _\bullet|_\infty) \cap \infty^![\widetilde{V}^2] \right) \\
& = p_{X*} \left( \ch^{Y^2_\infty}_{X_\infty}(E^1_\bullet \ot \xi^2 _\bullet|_\infty) \cap sp^2_A([V]) \right).
\end{align*}
\end{proof}

Let $sp^2: A_*(Y) \ra B_*(X^2_\infty)$ be the specialization morphism defined in \eqref{mod:sp}.
The proof of the following lemma is parallel to that of Lemma \ref{rein}.

\begin{Lemma} \label{Lem5}
We obtain an equivalence
\begin{align*}
 p_{X*} \left( \ch^{Y^2_\infty}_{X_\infty}(E^1_\bullet \ot \xi^2_\bullet) \cap sp^2_A(-) \right)\ =\  p_{X*} \left( \ch^{X^2_{\infty}}_{X_\infty}(E^1_\bullet \ot \xi^2_\bullet) \cap sp^2 \,(-) \right).
\end{align*}
\end{Lemma} 
\begin{proof}
This is obvious because $E^1_\bullet \ot \xi^2_\bullet$ is strictly exact off $X_\infty \subset X^2_\infty$.
The rest of the proof is the same as the one of Lemma \ref{rein}.
\end{proof}

\subsection*{Proof of Theorem \ref{Multi:Prop}}
Let $p_{X_2}: X^2_\infty \ra X_2$ be the projection morphism. Then we obtain
\begin{align*}
p_{X*} \left( \ch^{X^2_\infty}_{X_\infty}(E^1_\bullet \ot \xi^2_\bullet) \cap sp^2(-) \right) & = p_{X*} \left( \ch^{X^2_\infty}_{X_\infty}(E^1_\bullet) \cdot \ch^{X^2_\infty}_{X^2_\infty}( \xi^2_\bullet) \cap sp^2(-) \right) \\
& = \ch^{X_2}_{X}(E^1_\bullet) \cdot  p_{{X_2}*} \left( \ch^{X^2_\infty}_{X^2_\infty}( \xi^2_\bullet) \cap sp^2(-) \right) \\
& = \ch^{X_2}_{X}(E^1_\bullet) \cdot  \ch^Y_{X_2}(E^2_\bullet) \cap -
\end{align*}
where the first equality comes from \cite[Proposition 2.3(v)]{PV:A}, the second equality is the commutativity of bivariant classes and pushforwards \cite[Definition 17.1(C1)]{Ful} and the third one comes from \eqref{X=Y} and \eqref{Alter:deC}.
Then the proof follows from Lemma \ref{Lem4} and \ref{Lem5}.

\section{Functoriality of cosection-localized intersection homomorphisms} \label{app}

In this section, we discuss an application to intersection theory.

\subsection{Koszul 2-periodic complex}\label{Sect:kos}
Let $Y$ be a finite type DM stack and $E$ be a vector bundle on $Y$.
Let $\alpha : \cO_Y \ra E^*$ be a section of $E^*$ and $\beta: \cO_Y \ra E$ be a section of $E$.
Suppose that the pairing $\lan \alpha , \beta \ran $ is zero.
The Koszul 2-periodic complex $\{\alpha, \beta\}$ is defined as
$$\{\alpha, \beta\}\ :=\ \{ \xymatrix{
\oplus_k \wedge^{2k} E^* \ar@<1ex>[r]  ^-{\wedge \alpha + \iota_{\beta}}
&  \ar@<1ex>[l]^-{\wedge \alpha + \iota_{\beta}}
\oplus_k \wedge^{2k+1} E^*
} \} .$$
It is a 2-periodic complex on $Y$ strictly exact off $Z(\alpha, \beta):=Z(\alpha) \cap Z(\beta)$ \cite[Lemma 2.2(2)]{KO1}. 
Here $\iota_{\beta}$ denotes the contraction by $\beta$ (see \cite[Section 2.2]{KO1} for the convention).
We sometimes consider $\alpha$ as a cosection of $E$ without any mention when the context is clear.

\subsection*{Set-up}
Consider the following diagram of vector bundles on $Y$
\begin{align*}
\xymatrix@R=6mm{
 & \cO_Y \ar[d]^-{\beta_1} & \cO_Y \ar[d]^-{\beta} & \cO_Y \ar[d]^-{\beta_2} & \\
 0 \ar[r] &E_1 \ar[d]^-{\alpha_1} \ar[r]^-f & E \ar[d]^-{\alpha} \ar[r]^-g  &E_2 \ar[d]^-{\alpha_2} \ar[r]  & 0 \\
 & \cO_Y& \cO_Y& \cO_Y&
}
\end{align*}
where the compositions of all verticals are zero and the horizontal is a short exact sequence.
Suppose that the following conditions are satisfied.
\begin{enumerate}
\item There is a section $\beta': \cO_Y \ra E$ such that $\beta_2 = g \circ \beta'$.
\item There is a cosection $\alpha' :E \ra \cO_Y$ such that $\alpha_1 = \alpha' \circ f$.
\item $\alpha' \circ \beta = \alpha \circ \beta'=0$ and $\alpha -\alpha' = \alpha_2 \circ g$.
\end{enumerate}

\begin{example} \label{example}
The above conditions are satisfied for $E=E_1 \oplus E_2$, $\alpha=\alpha_1 + \alpha_2$ and $\beta=(\beta_1,\beta_2)$.
\end{example}

With the above notations and conditions, we construct a vector bundle $\wE$ on $Y \times \A^1$
and sections 
$$
\wa\ :\ \cO_{Y \times \A^1} \ \longrightarrow\ \wE^* \text{  and  } \wb\ :\ \cO_{Y \times \A^1}\ \longrightarrow\ \wE$$ 
such that $\lan \wa, \wb \ran =0$ and
\begin{align*}
\{\wa, \wb\} |_{\lambda} = \left\{
\begin{array}{cl}
\{\alpha, \beta\} & \text{if } \lambda=1, \\
\{\alpha_1, \beta_1\} \ot \{\alpha_2, \beta_2\} & \text{if } \lambda=0
\end{array}
\right.
\end{align*}
where $\lambda$ is the coordinate on $\A^1$.
Consider the following morphisms of vector bundles on $Y \times \mathbb{A}^1$,
\begin{align*}
\xymatrix@R=6mm@C=20mm{
 & \cO_{Y \times \mathbb{A}^1} \ar[d]^-{((\lambda-1)\beta_1, \ \lambda \beta + (1-\lambda)\beta' )}\\
E_1 \ar[r]^-{(\lambda\, \text{id} , f)} & E_1 \oplus E \ar[d]^-{-\alpha_1 +  \left(\alpha + (\lambda -1)\alpha' \right)}  \\
 & \cO_{Y \times \mathbb{A}^1} .
}
\end{align*}
Here and the rest of the section, we omit the notations of pullbacks for bundles when they are clear from the context by abuse of notations.
Let $\wE$ be the cokernel of $(\lambda\, \text{id}, f)$. Since $f^*$ is surjective, so is $(\lambda\, \text{id}, f)^*$ whose kernel is $\wE^*$. Hence $\wE$ is a bundle over $Y\times\A^1$.
The section $\wb: \cO_{Y \times \mathbb{A}^1} \ra \wE$ is induced by $\left((\lambda-1)\beta_1, \lambda \beta + (1-\lambda)\beta' \right)$.
Since we have
\begin{align*}
\left(-\alpha_1 +  \left(\alpha +(\lambda-1)  \alpha' \right)\right) \circ (\lambda id, f)\ =\ \alpha f - \alpha' f\ =\ \alpha_2 \, g  f \ =\ 0,
\end{align*}
we obtain the induced cosection $\wa: \wE \ra \cO_{Y \times \mathbb{A}^1} $.
Since 
\begin{align*}
\alpha' \beta' \ =\ \alpha\, \beta' - \alpha_2\, g \beta'\ =\ -\alpha_2 \beta_2\ =\ 0,
\end{align*}
we have $\lan \wa, \wb \ran =0$.
The restriction of the short exact sequence
$$
\xymatrix@C=10mm{
0 \ar[r] & E_1 \ar[r]^-{(\lambda \, \text{id}, f)} & E_1 \oplus E \ar[r] &  \wE \ar[r] &0
}
$$
to $\lambda=1$ is
$$
\xymatrix@C=10mm{
0 \ar[r] & E_1 \ar[r]^-{(\text{id}, f)} & E_1 \oplus E \ar[r]^-{-f+\text{id}} &  E \ar[r] &0.
}
$$
Hence the restriction $\wb|_{\lambda=1}$ is $(-f+\text{id})\circ(0, \beta)=\beta$ and $\wa|_{\lambda=1}$ is $\alpha$ because 
$$
-\alpha_1 + \alpha - \left( - \alpha f + \alpha \right)\ =\ -\alpha_1 + \alpha f\ =\ \alpha_2\, g f\ =\ 0.
$$
At $\lambda=0$, the restriction is
$$
\xymatrix@C=10mm{
0 \ar[r] & E_1 \ar[r]^-{(0, f)} & E_1 \oplus E \ar[r]^-{(-\text{id},\, g)} &  E_1 \oplus E_2 \ar[r] &0.
}
$$
Thus, the restriction $\wb|_{\lambda=0}$ is $(-\text{id},\ g)\circ(-\beta_1, \beta')=(\beta_1,\beta_2)$ and $\wa|_{\lambda=0}$ is $\alpha_1 + \alpha_2$.

Let us further assume that the common zero is contained in
\begin{align}\label{Fass}
Z(\wa, \wb) \ \subset\ X \times \mathbb{A}^1
\end{align} 
for some closed substack $X \subset Y$.
For instance if $\beta- \beta' = f \circ \beta_1$, then $X$ can be taken to be $Z(\alpha, \beta)$ since it is the case of Example \ref{example} \emph{locally}.
By applying the homotopy property \cite[Lemma 2.1]{PV:A} to $\mathrm{ch}^{Y \times \mathbb{A}^1}_{X \times \mathbb{A}^1}(\{\wa, \wb\})$, we have the following multiplicative formula.

\begin{Lemma} \label{Lemma1}
With the above set-up and the assumption \eqref{Fass}, we obtain the following equalities
\begin{align} \label{multiapp}
\mathrm{ch}^Y_X(\{\alpha, \beta\})\ & =\ \mathrm{ch}^{X \cup Z(\alpha_1,\beta_1)}_X (\{\alpha_2, \beta_2\}) \circ \mathrm{ch}^Y_{X \cup Z(\alpha_1,\beta_1)} (\{\alpha_1, \beta_1\})  \\ \nonumber
& =\ \mathrm{ch}^{X \cup Z(\alpha_2,\beta_2)}_X (\{\alpha_1, \beta_1\}) \circ \mathrm{ch}^Y_{X \cup Z(\alpha_2,\beta_2)} (\{\alpha_2, \beta_2\}).
\end{align}
Here $X\cup Z(\alpha_i,\beta_i)$ is a closed substack of $Y$ so that it comes with the scheme structure.
\end{Lemma}

\begin{proof}
We have
\begin{align*}
\mathrm{ch}^Y_X(\{\alpha, \beta\})\ & =\ 1^! \mathrm{ch}^{Y \times \mathbb{A}^1}_{X \times \mathbb{A}^1}(\{\wa, \wb\}) \\
& =\ 0^! \mathrm{ch}^{Y \times \mathbb{A}^1}_{X \times \mathbb{A}^1}(\{ \wa ,  \wb  \}) \\
& =\ \mathrm{ch}^Y_X(\{\alpha_1, \beta_1\} \ot \{\alpha_2, \beta_2 \}) \\
& =\ \mathrm{ch}^{X \cup Z(\alpha_1,\beta_1)}_X (\{\alpha_2, \beta_2\}) \circ \mathrm{ch}^Y_{X \cup Z(\alpha_1,\beta_1)} (\{\alpha_1, \beta_1\})  .
\end{align*}
Here, the first and third equalities are the bivariant property of localized Chern characters \cite[Definition 17.1(C3)]{Ful}, the second is the homotopy property \cite[Lemma 2.1]{PV:A} and the fourth equality comes from Theorem \ref{Multi:Prop}.
This proves the first equality of \eqref{multiapp}. 
The proof for the second equality of \eqref{multiapp} is the same.
\end{proof}

\subsection{Cosection-localized intersection homomorphism}
Let $M$ be a finite type DM stack and $F$ be a vector bundle on $M$.
Let $\sigma: F \ra \cO_M$ be a cosection.
Recall that $w_\sigma$ is a function on $F$ induced by $\sigma$.
Let $p: F \ra M$ be the projection morphism and $\tau_F: \cO_F \ra p^*F$ be the tautological section induced by the diagonal morphism
$$
F\ \longrightarrow\ F \times_M F.
$$
Since
$w_\sigma = p^*\sigma \circ \tau_F \in \Gamma(\cO_{F})$,
the Koszul complex $\{p^*\sigma, \tau_F\}$ defines a 2-periodic complex on $Z(w_\sigma)$.
Moreover, the zero loci are $Z(p^*\sigma) = F|_{Z(\sigma)} \subset F$ and $Z(\tau_F) = M \subset F$ where the second one is given by the zero section.
Hence $Z(p^*\sigma, \tau_F) = Z(\sigma)$. Thus $\{p^* \sigma, \tau_F\}$ is strictly exact off $Z(\sigma)$.

As we have mentioned in the introduction, Kim and the author proved that the cosection-localized intersection homomorphism is expressed in terms of a localized Chern character \cite[Theorem 1.1]{KO1}
\begin{align} \label{KO}
0^!_{F, \sigma}\ =\ \mathrm{td}(F|_{Z(\sigma)}) 
\cdot \mathrm{ch}^{Z(w_\sigma)}_{Z(\sigma)}(\{p^*\sigma, \tau_F\}).
\end{align}
This immediately proves Theorem \ref{Cosec:Func} using Lemma \ref{Lemma1}.

Indeed, Theorem \ref{Cosec:Func} can be improved to the following general situation. We provide a proof for this instead of giving that of Theorem \ref{Cosec:Func}.
Consider the following diagram of vector bundles on $M$,
\begin{align*}
\xymatrix@R=6mm{
 0 \ar[r] &F_1 \ar[d]^-{\sigma_1} \ar[r]^-f & F \ar[d]^-{\sigma} \ar[r]^-g  &F_2 \ar[d]^-{\sigma_2} \ar[r]  & 0 \\
 & \cO_M& \cO_M& \cO_M&
}
\end{align*}
where the horizontal is a short exact sequence.
Suppose that there exists a cosection $\sigma' : F \ra \cO_M$ such that $\sigma_1 = \sigma' \circ f$ and $\sigma-\sigma' = \sigma_2 \circ g$.
Letting $Y:= Z(w_{\sigma_1}) \times_M Z(w_{\sigma_2})$ and $p: Y \ra M$ be the projection morphism, we obtain a diagram
\begin{align*}
\xymatrix@R=6mm{
 & \cO_Y \ar[d]^-{\tau_{F_1}} & \cO_Y \ar[d]^-{\tau_F} & \cO_Y \ar[d]^-{\tau_{F_2}} & \\
 0 \ar[r] &p^*F_1 \ar[d]^-{p^*\sigma_1} \ar[r]^-f & p^*F \ar[d]^-{p^*\sigma} \ar[r]^-g  &p^*F_2 \ar[d]^-{p^*\sigma_2} \ar[r]  & 0 \\
 & \cO_Y& \cO_Y& \cO_Y&
}
\end{align*}
in the set-up of Section \ref{Sect:kos}.
By letting $\tau' := \tau_F - f \circ \tau_{F_1}$, we can check that
\begin{align} \label{sevid}
g \circ \tau' = \tau_{F_2},\ \ \tau_F - \tau' = f \circ \tau_{F_1},\ \ p^*\sigma \circ \tau' =p^*\sigma' \circ \tau_F =0
\end{align}
using the \emph{local} splitting of the exact sequence.
The zero locus of $\sigma$ is $Z(\sigma) = Z(\sigma_1) \cap Z(\sigma_2)$.
Note that we may not have the last identity in \eqref{sevid} on $Z(w_\sigma)$ (which is bigger than $Y$).

\begin{Thm}
Let $F, F_1, F_2, \sigma, \sigma_1, \sigma_2, f, g$ and $\sigma'$ be as above such that $\sigma_1 = \sigma' \circ f$ and $\sigma- \sigma' = \sigma_2 \circ g$.
Then we have the following functorial property
\begin{align} \label{multiapp1}
0^!_{F, \sigma}\, =\, 0^!_{F_1, \sigma_1} \circ 0^!_{F_2, \sigma_2} &\, =\, 0^!_{F_2, \sigma_2} \circ 0^!_{F_1, \sigma_1} : A_*(Y) \ra A_{*-\rank F}(Z(\sigma)),
\end{align}
where $Y:=Z(w_{\sigma_1}) \times_M Z(w_{\sigma_2})$.
\end{Thm}
\begin{proof} We have
\begin{align*}
0^!_{F, \sigma}\ &=\ \mathrm{td}(F|_{Z(\sigma)}) 
\cdot \mathrm{ch}^{Y}_{Z(\sigma)}(\{p^*\sigma, \tau_F\}) \\
& =\ \mathrm{td}(F_1|_{Z(\sigma)}) \mathrm{td}(F_2|_{Z(\sigma)}) 
\cdot \mathrm{ch}^{Y_2}_{Z(\sigma)}(\{p^*\sigma_1, \tau_{F_1}\} ) \circ \ch^Y_{Y_2}( \{p^*\sigma_2, \tau_{F_2}\}) \\
& =\ \mathrm{td}(F_1|_{Z(\sigma)}) 
\cdot \mathrm{ch}^{Y_2}_{Z(\sigma)}(\{p^*\sigma_1, \tau_{F_1}\} ) \circ \mathrm{td}(F_2|_{Y_2})  \ch^Y_{Y_2}( \{p^*\sigma_2, \tau_{F_2}\}) \\
& =\ 0^!_{F_1, \sigma_1} \circ 0^!_{F_2, \sigma_2},
\end{align*}
where $Y_2 := Z(w_{\sigma_1}) \times_M Z(\sigma_2)$ and $F_2|_{Y_2}$ denotes the pullback of $F_2$ to $Y_2$.
Here the first and fourth equalities are from \eqref{KO}, the second one comes from Lemma \ref{Lemma1} and the third one is the commutativity of bivariant classes and Chern classes \cite[Proposition 17.3.2]{Ful}.
This proves the first equality of \eqref{multiapp1}. 
The proof for the second equality of \eqref{multiapp1} is the same.
\end{proof}

\end{document}